\theoremstyle{plain}
\newtheorem{thm}{Theorem}[section]
\newtheorem{lem}[thm]{Lemma}
\newtheorem{conj}[thm]{Conjecture}
\title{\vspace{-0.7cm}Books versus triangles at the extremal density}
\author{David Conlon\thanks{Department of Mathematics, California Institute of Technology, Pasadena, CA 91125, USA. Email: {\tt dconlon@caltech.edu}. Research supported by ERC Starting Grant 676632.}
\and
Jacob Fox\thanks{Department of Mathematics, Stanford University, Stanford,
CA 94305, USA. Email: {\tt jacobfox@stanford.edu}. Research supported by
a Packard Fellowship and by NSF Career Award DMS-1352121.}
\and
Benny Sudakov\thanks{Department of Mathematics, ETH, 8092 Zurich, Switzerland. Email: {\tt benjamin.sudakov@math.ethz.ch}. Research supported in part by SNSF grant 200021-175573.}}
\date{}
\begin{document}
\maketitle

\begin{abstract} 
A celebrated result of Mantel shows that every graph on $n$ vertices with $\lfloor n^2/4 \rfloor + 1$ edges must contain a triangle. A robust version of this result, due to Rademacher, says that there must in fact be at least $\lfloor n/2 \rfloor$ triangles in any such graph. Another strengthening, due to the combined efforts of many authors starting with Erd\H{o}s, says that any such graph must have an edge which is contained in at least $n/6$ triangles. Following Mubayi, we study the interplay between these two results, that is, between the number of triangles in such graphs and their book number, the largest number of triangles sharing an edge. Among other results, Mubayi showed that for any $1/6 \leq \beta < 1/4$ there is $\gamma > 0$ such that any graph on $n$ vertices with at least $\lfloor n^2/4\rfloor + 1$ edges and book number at most $\beta n$ contains at least $(\gamma -o(1))n^3$ triangles. He also asked for a more precise estimate for $\gamma$ in terms of $\beta$. We make a conjecture about this dependency and prove this conjecture for $\beta = 1/6$ and for $0.2495 \leq \beta < 1/4$, thereby answering Mubayi's question in these ranges.
\end{abstract}

\section{Introduction}

Mantel's theorem \cite{Man} from 1907 is among the earliest results in extremal graph theory. It states that the maximum number of edges that a triangle-free graph on $n$ vertices can have is $\lfloor n^2/4 \rfloor$, with equality if and only if the graph is the balanced complete bipartite graph. So a graph on $n$ vertices with one more edge must have at least one triangle. Must it have many triangles? Must there be an edge in many triangles? Such questions have a long history of study in extremal graph theory.

In unpublished work, Rademacher answered the first question above in 1950, proving that every graph on $n$ vertices with $\lfloor n^2/4 \rfloor+1$ edges has at least $\lfloor n/2 \rfloor$ triangles, which is tight by adding an edge inside the larger part of a balanced complete bipartite graph.  Erd\H{o}s \cite{Er62} then extended this result to graphs with a linear number of extra edges and, in \cite{Er62b}, studied the problem for larger cliques. Over the last fifty years, many further results in this direction have been obtained by various researchers, see, e.g., \cite{Bol, LPS, Lov-Sim, Nikiforov, Razborov, Reiher} and their references.  

The second question, about finding an edge in many triangles, was first studied by Erd\H{o}s \cite{Er62} in 1962. A {\it book} in a graph is a collection of triangles that have an edge in common. The \emph{size} of the book is the number of such triangles. The \emph{book number} of a graph $G$, denoted by $b(G)$, is the size of the largest book in the graph. Erd\H{o}s proved that every graph $G$ on $n$ vertices with $\lfloor n^2/4 \rfloor+1$ edges satisfies $b(G) \geq n/6-O(1)$ and conjectured that the $O(1)$-term can be removed. Solving this  conjecture  and answering the second question above, Edwards and, independently, Khad\v{z}iivanov and Nikiforov~\cite{KN79} proved that every such graph satisfies $b(G)\geq n/6$, which is tight. 

Our concern here is with a problem of Mubayi \cite{Mubayi} about the interplay between the two questions above. More precisely, if a graph $G$ on $n$ vertices with $\lfloor n^2/4 \rfloor+1$ edges satisfies $b(G) \leq b$, at least how many triangles must it have? We write $t(n, b)$ for this minimum number. Mubayi proved that for fixed $\beta \in (1/4,1/2)$, if $b(G)<\beta n$, then $t(n,b) \geq \left(\frac{1}{2}\beta(1-2\beta)-o(1)\right)n^2$, a bound which is asymptotically tight. He also showed that $t(n, b)$ changes from quadratic to cubic in $n$ when $b \approx n/4$. More precisely, he proved that for each $\beta \in (1/6,1/4)$ there is $\gamma>0$ such that $t(n,\beta n) \geq \gamma n^3$. He then asked for a more precise determination of the optimal $\gamma$ in terms of $\beta$, but added that the problem `seems very hard'. Our contribution in this paper is to make a conjecture about this dependency and to confirm this conjecture for $\beta = 1/6$ and for $0.2495 \leq \beta < 1/4$.  

To say more, consider the $3$-prism graph, the skeleton of the $3$-prism, consisting of two disjoint triangles with a perfect matching between them. For nonnegative integers $b$ and $n$ with $b \leq n/4$, let $S_{b,n}$ be the graph on $n$ vertices formed by blowing up the $3$-prism graph, where four of the six parts, corresponding to the vertices of two edges of the matching, are of size $b$, and the remaining two parts are of size $\lfloor (n-4b)/2\rfloor$ and $\lceil (n-4b)/2\rceil$. Restated, $S_{b,n}$ has vertex set consisting of six parts $U_1,U_2,U_3,V_1,V_2,V_3$ with $|U_1|=|U_2|=|V_1|=|V_2|=b$, $|U_3|=\lfloor (n-4b)/2\rfloor$, $|V_3|=\lceil (n-4b)/2\rceil$, $U_i$ is complete to $U_j$ for $i \not = j$, $V_i$ is complete to $V_j$ for $i \not = j$, $U_i$ is complete to $V_i$ for each $i$ and there are no other edges. The graph $S_{b,n}$ has $n$ vertices, $\lfloor n^2/4 \rfloor$ edges, book number $b$ if $b \geq n/6$ and $b^2(n-4b)$ triangles. If $b=0$ or $n/4$, then $S_{b,n}$ is the balanced complete bipartite graph, but otherwise has triangles. We make the following conjecture.\footnote{Though the final version of Mubayi's paper~\cite{Mubayi} contains no conjecture about the behaviour of $t(n, \beta n)$ for $1/6 \leq \beta < 1/4$, the original arXiv version described a construction which is almost identical to that given here. As such, we might well ascribe an approximate version of Conjecture~\ref{mainconj} to him.}

\begin{conj} \label{mainconj}
If $n/6 \leq b < n/4$, then every graph on $n$ vertices with at least $\lfloor n^2/4 \rfloor$ edges and book number at most $b$ 
which is not the balanced complete bipartite graph has at least $b^2(n-4b)$ triangles, with equality if and only if the graph is $S_{b,n}$.  
\end{conj}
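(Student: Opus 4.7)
The plan is to combine a stability argument with a detailed extremal analysis of a maximum cut $(A,B)$; I will keep the letter $b$ for the book bound and use $|A|,|B|$ for the cut sizes. First I would invoke a supersaturation/stability version of Mantel's theorem to reduce to the case where $G$ is close in edit distance to the balanced complete bipartite graph: if $G$ is far from bipartite then a standard supersaturation bound already produces many more than $b^2(n-4b)$ triangles. In this near-bipartite regime, fix a maximum cut $(A,B)$ with $|A|,|B|$ close to $n/2$, and let $E_A=E(G[A])$ and $E_B=E(G[B])$ denote the ``bad'' edges. From $e(G)\ge\lfloor n^2/4\rfloor$ and $|E(A,B)|\le|A||B|\le\lfloor n^2/4\rfloor$ one obtains $|E_A|+|E_B|\ge\lfloor n^2/4\rfloor-|E(A,B)|$; by maximality of the cut, each vertex sends at least as many edges across as inside, which tightly constrains how the bad edges can be distributed.

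Second, I would lower bound the triangle count using
\[
t(G)\ \ge\ \sum_{uv\in E_A}|N(u)\cap N(v)\cap B|\ +\ \sum_{uv\in E_B}|N(u)\cap N(v)\cap A|,
\]
which counts exactly the triangles with two vertices on one side of the cut. Each summand is at most $b$ by the book hypothesis. It is illuminating to verify tightness on $S_{b,n}$: for $b>n/6$ the (essentially unique) maximum cut is $A=U_1\cup V_2\cup U_3$, $B=U_2\cup V_1\cup V_3$; the bad edges inside $A$ form a complete bipartite graph $K_{b,(n-4b)/2}$ between $U_1$ and $U_3$, those inside $B$ similarly between $V_1$ and $V_3$, and each such bad edge has exactly $b$ common neighbours across the cut (in $U_2$ or $V_2$), producing precisely $b\cdot b(n-4b)=b^2(n-4b)$ triangles. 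So the construction is ``doubly saturated'' in the sense that both the book constraint and the max-cut inequality are tight on every bad edge.

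The heart of the argument is a defect/optimization step: subject to the constraints that every edge lies in at most $b$ triangles, the cut is maximum, and $e(G)\ge\lfloor n^2/4\rfloor$, one must show that the right-hand side above is minimized precisely when $E_A\cup E_B$ is organized into two complete bipartite graphs, one in each side of the cut, with parts of sizes $b$ and $(n-4b)/2$, and the cross edges form the blow-up of the prism's remaining adjacencies. A shifting/compression argument — moving a bad edge from one location to another while tracking the net change in triangle count — should force this organization: any ``spread-out'' configuration of bad edges either creates a pair of adjacent vertices with more than $b$ common neighbours (violating the book bound) or leaves slack in the max-cut inequality that one can exchange for extra triangles. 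Once the structure of $E_A\cup E_B$ is pinned down, the six blocks $U_1,\dots,V_3$ are readable from the graph and the prism adjacencies follow from the saturated edge count $e(G)=\lfloor n^2/4\rfloor$.

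The main obstacle is this defect analysis across the full range $n/6\le b<n/4$. Near $b=n/4$ the extremal graph degenerates to the balanced complete bipartite graph itself, so $|E_A|+|E_B|$ is tiny and a direct perturbation around a nearly empty set of bad edges suffices; at the other endpoint $b=n/6$ all six blocks of $S_{b,n}$ have equal size $n/6$ and the extra symmetry collapses the optimization into a symmetric problem that can be solved exactly. In the middle range neither simplification is available, the bad edges are many and organised on two different scales ($b$ versus $(n-4b)/2$), and I expect that a genuinely new ingredient — a Lagrangian or flag-algebra computation, or a more refined weighted averaging that exploits the product structure of $b^2(n-4b)$ — is needed to carry the defect analysis out with the sharpness required for exact equality. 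This is presumably why the paper confirms the conjecture only at the endpoints $\beta=1/6$ and $\beta\in[0.2495,1/4)$.
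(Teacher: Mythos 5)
The statement is Conjecture~\ref{mainconj}, which the paper leaves open in the middle of the range and settles only for $b=n/6$ (Theorem~\ref{thmbooknearsixth}) and $0.2495n \le b < n/4$ (Theorem~\ref{thmbooknearquarter}). Your proposal---as you note yourself in the last paragraph---is a strategy sketch rather than a proof, so the fair comparison is with the proofs of those two theorems. There your decomposition is genuinely different from the paper's: you fix a maximum cut $(A,B)$ and study the bad edge sets $E_A,E_B$, whereas the paper takes the largest \emph{induced} bipartite subgraph, so that both $A$ and $B$ are independent, and collects the leftover vertices into a small set $C$ (which in $S_{b,n}$ is the hub $U_3\cup V_3$ of the prism). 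That choice is what makes the counting go through: a triangle through $v\in C$ contributes a term $d_A(v)d_B(v)$ that is directly lower-bounded once the degrees of $C$-vertices into $A$ and $B$ are controlled, and the book hypothesis is used only in expectation via $\bar b$; Lemma~\ref{secondlemma} is precisely the cleanup step that refines a near-maximum cut into this tripartite structure. Your tightness analysis of the max cut in $S_{b,n}$ (bad edges forming $K_{b,(n-4b)/2}$ inside each side, each in exactly $b$ cross triangles) is correct and a good sanity check on the construction.

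The concrete gap in your route, beyond the admitted incompleteness, is that the inequality $t(G)\ge\sum_{uv\in E_A}|N(u)\cap N(v)\cap B|+\sum_{uv\in E_B}|N(u)\cap N(v)\cap A|$ is only useful if you can lower-bound the per-edge codegree across the cut, and none of the hypotheses you invoke supplies such a bound. Even granting $d(u),d(v)\ge(\tfrac12-\epsilon)n$ and the max-cut condition $d_B(u)\ge d_A(u)$, inclusion--exclusion gives only $|N(u)\cap N(v)\cap B|\ge d_B(u)+d_B(v)-|B|\ge -O(\epsilon n)$, which is vacuous, and the book hypothesis gives an \emph{upper} bound $d(u,v)\le b$, the wrong direction. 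The ``shifting/compression'' you gesture at is morally what the paper does with the modified graphs $G_1$ and $G_2$ in Claims~9--11, but that bookkeeping leans specifically on $A,B$ being independent and $|C|$ being small, conditions not available for a general maximum cut; to get a footing you would need a substitute for Lemma~\ref{secondlemma} that controls the codegrees of bad edges, and your sketch does not indicate where it would come from.
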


Our main result is a proof of Conjecture \ref{mainconj} when $b$ is not much smaller than $n/4$. 

\begin{thm}\label{thmbooknearquarter}
Conjecture \ref{mainconj} holds for graphs with at least $n^2/4$ edges if $0.2495n \leq b < n/4$.
\end{thm}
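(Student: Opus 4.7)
The plan is to exploit the forced closeness of $G$ to the complete bipartite graph (from $e(G)\geq n^2/4$) together with the book constraint $b(G)\leq b<n/4$; the hypothesis $b\geq 0.2495\,n$ keeps $G$ so close to $K_{\lfloor n/2\rfloor,\lceil n/2\rceil}$ that a fairly direct calculation should succeed. First I take a bipartition $(A,B)$ of $V(G)$ maximising $e(A,B)$; standard smoothing lets one reduce to $|A|=|B|=n/2$ (the case $n$ odd is analogous). Define $x_v=|B\setminus N(v)|$ for $v\in A$ and $y_u=|A\setminus N(u)|$ for $u\in B$, and set $\alpha=e_G(A)$, $\beta=e_G(B)$ and $m=\sum_{v\in A}x_v$. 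Since $e(A,B)=n^2/4-m$, the edge hypothesis rearranges to the fundamental inequality $\alpha+\beta\geq m$.

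The local consequence of the book bound is this: for any edge $uv\in E(A)$, the pigeonhole estimate $|N_B(u)\cap N_B(v)|\geq n/2-x_u-x_v$ together with $|N(u)\cap N(v)|\leq b$ forces $x_u+x_v\geq n/2-b$, equivalently $d_B(u)+d_B(v)\leq n/2+b$; summing over $E(A)$ gives $\sum_{v\in A}d_A(v)\,x_v\geq (n/2-b)\,\alpha$, and analogously for $B$. To lower-bound triangles I partition $T(G)=T_{AAA}+T_{AAB}+T_{ABB}+T_{BBB}$ by side distribution; dropping the nonnegative $T_{AAA},T_{BBB}$ and applying $|N_B(u)\cap N_B(v)|\geq d_B(u)+d_B(v)-n/2$ edge by edge in $E(A)$ yields
\[
T_{AAB}\;\geq\;\sum_{uv\in E(A)}\bigl(d_B(u)+d_B(v)-n/2\bigr)\;=\;P_A-(n/2)\alpha,
\]
where $P_A=\sum_{v\in A}d_A(v)d_B(v)$. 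Adding the symmetric bound for $T_{ABB}$ gives the master inequality
\[
T(G)\;\geq\;P-(n/2)(\alpha+\beta),\qquad P:=\sum_{v\in V(G)}d_A(v)d_B(v),
\]
and a direct computation shows that its right-hand side is tight on $S_{b,n}$ and evaluates there to exactly $b^2(n-4b)$.

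What remains---and is the crux of the argument---is to prove $P\geq(n/2)(\alpha+\beta)+b^2(n-4b)$. Via the identity $P_A=n\alpha-\sum_{v\in A}d_A(v)x_v$, the book sum already gives the \emph{upper} bound $P_A\leq(n/2+b)\alpha$, with its $B$-analogue; a matching \emph{lower} bound on $P$ is more delicate. The plan is to combine $\alpha+\beta\geq m$, the book sums, and the book constraint applied to cross edges (which gives $|N_A(a)\cap N_A(b)|+|N_B(a)\cap N_B(b)|\leq b$ for every $ab\in E(A,B)$), and then optimise $P-(n/2)(\alpha+\beta)$ over the feasible degree sequences. The threshold $0.2495$ arises as the critical value at which this optimisation still outputs the target $b^2(n-4b)$; I expect this optimisation to be the main obstacle, as it must use the near-tightness of several inequalities at once, and the approach clearly degrades well before $b=n/6$. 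Equality analysis then traces each inequality: book tight forces $x_u+x_v=n/2-b$ on every $A$-edge, pigeonhole tight forces the corresponding $B$-non-neighbourhoods to be disjoint, and $\alpha+\beta=m$ pins down the edge counts---together forcing the $3$-prism blow-up structure and hence $G=S_{b,n}$.
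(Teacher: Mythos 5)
Your approach is genuinely different from the paper's. The paper first invokes a version of the Erd\H{o}s--Faudree--Pach--Spencer max-cut lemma to locate a large \emph{induced} bipartite subgraph $H=G[A\cup B]$, places the few remaining vertices into a small set $C$, and then performs two surgical modifications $G\to G_1\to G_2$ (completing $A\times B$, regularising degrees from $C$ into $A$ and $B$) while tracking both a lower-bound quantity $\tilde t$ on type-1/2 triangles and an averaged book number $\bar b$ over $E(A\cup B,C)$. You instead work directly with a max-cut bipartition, absorbing the ``bad'' vertices into both sides, and reduce everything to a single scalar inequality $P\geq(n/2)(\alpha+\beta)+b^2(n-4b)$ where $P=\sum_v d_A(v)d_B(v)$. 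Your setup through the master inequality $T(G)\geq P-(n/2)(\alpha+\beta)$ is correct (assuming $|A|=|B|=n/2$), the derivation $T_{AAB}\geq P_A-(n/2)\alpha$ from inclusion--exclusion is sound, and your verification that the right-hand side equals $b^2(n-4b)$ on $S_{b,n}$ checks out.

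However, there are two genuine gaps. The serious one is that the crux --- the lower bound $P\geq(n/2)(\alpha+\beta)+b^2(n-4b)$ --- is not proved; you explicitly defer it to an unperformed ``optimisation over feasible degree sequences,'' while acknowledging it to be the main obstacle. The book constraints you extract (the lower bound $\sum_{v\in A}d_A(v)x_v\geq(n/2-b)\alpha$, its $B$-analogue, and the cross-edge codegree constraint) point in the wrong direction: they give an \emph{upper} bound $P_A\leq(n/2+b)\alpha$, not the needed lower bound, and it is not at all clear what additional structural input closes the gap. Indeed, the paper's route suggests that some localisation is essential --- one must separate the small set of ``prism-like'' vertices from the bipartite bulk --- precisely because global degree-sequence constraints alone are too weak, which is why the paper goes through $G_1,G_2$ and the averaged book number $\bar b$. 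The second, smaller gap is the reduction to $|A|=|B|=n/2$: the max cut of $G$ need not be balanced, and the inequality $T(G)\geq P-(n/2)(\alpha+\beta)$ depends on the part sizes (in general one gets $T\geq P-|B|\alpha-|A|\beta$). One would need to first bound the triangle count (e.g., via a lemma like the paper's Lemma~\ref{firstlemma}) to argue the max cut is nearly balanced, which is circular as stated. As presented, the proposal is a promising framework with a correct reduction but a missing core argument.
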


While Theorem \ref{thmbooknearquarter} is stated for graphs with at least $n^2/4$ edges, the proof is robust enough to yield the analogous result for graphs with $\lfloor n^2/4 \rfloor$ edges. We only prove the weaker statement for simplicity of presentation.  

The perceptive reader will have noticed that our Conjecture~\ref{mainconj} differs from Mubayi's question in one small, but important, point of detail: we allow our graphs to have $\lfloor n^2/4 \rfloor$ edges, whereas Mubayi looks at graphs with at least $\lfloor n^2/4 \rfloor + 1$ edges, thus guaranteeing that there are always some triangles. However, Conjecture~\ref{mainconj} also implies an asymptotically tight bound on the function $t(n, b)$. To see this, consider a slightly different blow-up of the $3$-prism graph, adding one vertex to each $U_i$ and subtracting one vertex from each $V_i$. If $n$ is even, we get a graph with book number $b+1$ and with three more edges and $n$ more triangles than $S_{n,b}$. If $n$ is odd, we get a graph with book number $b+1$ and with two more edges and $n-2b$ more triangles than $S_{n,b}$. We now delete two edges, each in $b+1$ triangles but not in a common triangle, if $n$ is even and one edge in $b+1$ triangles if $n$ is odd, yielding the bounds $t(n,b+1) \leq b^2(n-4b)+n-2(b+1)$ if $n$ is even and $t(n,b+1) \leq b^2(n-4b)+n-2b-(b+1)$ if $n$ is odd. Together with Conjecture~\ref{mainconj}, these constructions imply the required asymptotic estimate on $t(n,b)$ for $n/6 \leq b \leq n/4-\omega(1)$, where the $\omega(1)$ term indicates any function tending to infinity with $n$. 

We also study what happens at the other end of the range, showing that Conjecture~\ref{mainconj} holds for $b = n/6$. More precisely, we will make use of results from a paper of Bollob\'as and Nikiforov~\cite{BN05}, themselves derived from the earlier work of Edwards and Khad\v{z}iivanov--Nikiforov~\cite{KN79}, to show that the conjecture holds in this case.

\begin{thm}\label{thmbooknearsixth}
Conjecture~\ref{mainconj} holds for graphs with at least $n^2/4$ edges if $b = n/6$.
\end{thm}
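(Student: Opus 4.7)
I would derive Theorem~\ref{thmbooknearsixth} from a sharpened form of the Edwards/Khad\v{z}iivanov--Nikiforov (KN) theorem due to Bollob\'as and Nikiforov~\cite{BN05}. Suppose $G$ has at least $n^2/4$ edges, satisfies $b(G)\le n/6$, and is not the balanced complete bipartite graph. By Mantel's theorem $G$ contains a triangle, so the KN theorem applies to give $b(G)\ge n/6$, and combined with the hypothesis we conclude $b(G)=n/6$ exactly. The task therefore reduces to showing that $k_3(G)\ge n^3/108$, with equality forcing $G=S_{n/6,n}$.

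\textbf{Main bound from \cite{BN05}.} The Bollob\'as--Nikiforov paper refines the KN proof, extracting a quantitative inequality that controls the triangle count in terms of the book number. I expect to use it in (or reduce it to) the form
\[
 k_3(G)\ge b(G)^2\bigl(n-4b(G)\bigr),
\]
valid whenever $|E(G)|\ge n^2/4$, $G$ is non-bipartite, and $b(G)\le n/4$. Substituting $b(G)=n/6$ immediately gives $k_3(G)\ge n^3/108$, matching the value realized by $S_{n/6,n}$. For the equality case, I would trace tightness through each step of the BN05 argument: the upper bounds on $|N(u)\cup N(v)|$, the degree-sum bound $d(u)+d(v)\le n+b$ on the edges of a maximum book, and the structural constraints on the link graphs of vertices in that book should all be saturated. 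Together these conditions should force a partition of $V(G)$ into six equal parts with adjacency pattern the 3-prism, identifying $G$ as $S_{n/6,n}$.

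\textbf{Main obstacle.} The chief difficulty is extracting from \cite{BN05} the precise inequality and equality case above, rather than a weaker stability version. The endpoint $b=n/6$ is the most delicate range in Conjecture~\ref{mainconj}, since it sits exactly at the threshold of KN; any slack in the BN05 bound at this endpoint would prevent a tight conclusion. If \cite{BN05} supplies only an approximate bound, the plan is to supplement it with a direct structural analysis at this boundary, leveraging the known rigidity of the KN extremal configuration to upgrade from approximate to exact equality.
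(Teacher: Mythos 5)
Your proposal rests on a claim that does not hold: the Bollob\'as--Nikiforov paper does not contain, nor straightforwardly imply, the inequality $k_3(G)\ge b(G)^2\bigl(n-4b(G)\bigr)$ for $b(G)\le n/4$. If it did, that inequality would immediately establish Conjecture~\ref{mainconj} in full for all $n/6 \le b < n/4$, and both Theorem~\ref{thmbooknearquarter} and Theorem~\ref{thmbooknearsixth} would be trivial corollaries --- there would be nothing left to prove in this paper. What \cite{BN05} actually provides (Equation~(8) there, which the paper quotes) is
\[
(6b-n)\,t(G)\;\ge\; b\left(\sum_v d(v)^2 - nm\right),
\]
which is quite different in character. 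At $b=n/6$ the left-hand side vanishes, so this inequality yields regularity information (combined with $m\ge n^2/4$ and Cauchy--Schwarz it forces $\sum_v d(v)^2\le nm$ and hence near-regularity of $G$), but it gives no lower bound on $t(G)$ at all. The entire content of the section you are trying to bypass is precisely the structural argument needed to convert this regularity information into a triangle count.

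Concretely, after using the BN05 inequality only to establish approximate regularity, the paper must still show: that for any vertex $v$ with $e(N_v)>0$, every vertex of positive degree in $N_v$ has degree close to $n/6$ (Claim~B), that this forces $e(N_v)\gtrsim n^2/36$ and hence $\gtrsim n^3/216$ triangles with two vertices in $N_v$ (Claim~C), that $e(N_v)\approx e(M_v)$ (Claim~D), that some $w\in N_v$ is isolated in $N_v$ so that $N_w\subset M_v$ (Claim~E), and finally that the triangles coming from $N_v$ and $N_w$ are disjoint, doubling the count to $n^3/108$. None of this is latent in BN05, and your plan to ``trace tightness through each step of the BN05 argument'' cannot substitute for it, because BN05 provides no triangle lower bound to be tight against. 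The equality-case analysis (identifying the six parts of $S_{n/6,n}$) is likewise an additional argument built on Claims~B--E, not something recoverable from the saturation of an inequality that isn't there. Your proposal correctly identifies BN05 as the right external ingredient, but the gap between what BN05 gives and what you need is exactly where the proof lives.
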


Once again, the theorem holds for graphs with $\lfloor n^2/4 \rfloor$ edges, but it is more convenient, principally from a notational standpoint, to assume that there are at least $n^2/4$ edges.

\vspace{3mm}

{\bf Notation.} For a graph $G$ and vertex $v$, the neighborhood $N(v)$ denotes the set of vertices adjacent to $v$, while the degree of $v$ is denoted by $d(v):=|N(v)|$ and the degree of $v$ into a vertex subset $A$ is denoted by $d_A(v):=|N(v) \cap A|$. For two vertices $u$ and $v$, their common neighborhood is denoted by $N(u,v)$ and their codegree $d(u,v)=|N(u,v)|$ is the number of vertices adjacent to both $u$ and $v$. The codegree of $u$ and $v$ into a vertex subset $A$ is denoted by $d_A(u,v):=|N(u,v) \cap A|$. For a vertex subset $A$, we write $E(A)$ for the set of edges in $A$ and $e(A)$ for the number of such edges. Similarly, for vertex subsets $A$ and $B$, the set of edges with one vertex in $A$ and the other in $B$ is denoted by $E(A,B)$ and the number of such edges is $e(A,B)=|E(A,B)|$. If the underlying graph $G$ is not clear from context, we include it in the notation.  

\section{Proof of Theorem~\ref{thmbooknearquarter}}

The following lemma gives a bound on the maximum cut of a graph with few triangles. The result and proof in the special case of triangle-free graphs is due to Erd\H{o}s, Faudree, Pach and Spencer \cite{EFPS}. 

\begin{lem}\label{firstlemma}
If $G$ is a graph with $n$ vertices, $m$ edges and $t$ triangles, then $G$ can be made bipartite by deleting 
at most $m-\frac{4m^2}{n^2}+\frac{6t}{n}$ edges. 
\end{lem}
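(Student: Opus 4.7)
The plan is to produce a single cut $(A,B)$ of $G$ whose size is at least $\tfrac{4m^2}{n^2}-\tfrac{6t}{n}$; the lemma follows since the $m$ edges that remain after deleting all cut edges (i.e.\ keeping only those inside $A$ or inside $B$) would then be at most $m-\tfrac{4m^2}{n^2}+\tfrac{6t}{n}$. Equivalently, the cut gives a bipartite subgraph, and the edges we remove are everything not in the cut.

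The cuts I would try are the neighbourhood cuts $(A_v,B_v):=(N(v),V\setminus N(v))$, one for each vertex $v$, and I would average over a uniformly random $v$. For a fixed $v$, write $T(v)$ for the number of triangles through $v$, so that $T(v)=e(N(v))$. Counting edges from $N(v)$ to $V\setminus N(v)$ by summing degrees into $V\setminus N(v)$ over $u\in N(v)$ gives
\[
e(N(v),V\setminus N(v)) \;=\; \sum_{u\in N(v)} d(u) \;-\; \sum_{u\in N(v)} d(u,v) \;=\; \sum_{u\in N(v)} d(u)\;-\;2T(v),
\]
since $\sum_{u\in N(v)} d(u,v)$ counts ordered pairs $(u,w)$ with $u,w\in N(v)$ and $u\sim w$, i.e.\ $2e(N(v))=2T(v)$.

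Averaging over $v$, the first term becomes $\tfrac{1}{n}\sum_v\sum_{u\in N(v)} d(u) = \tfrac{1}{n}\sum_u d(u)^2$, and the second becomes $\tfrac{2}{n}\sum_v T(v)=\tfrac{6t}{n}$. Cauchy--Schwarz then yields $\sum_u d(u)^2 \geq (2m)^2/n = 4m^2/n$, so some $v$ achieves
\[
e(N(v),V\setminus N(v)) \;\geq\; \tfrac{1}{n}\Bigl(\tfrac{4m^2}{n}-6t\Bigr) \;=\; \tfrac{4m^2}{n^2}-\tfrac{6t}{n},
\]
and deleting the other edges makes $G$ bipartite. There is no real obstacle here: the only thing to be careful about is the bookkeeping in the identity $\sum_{u\in N(v)} d(u,v)=2T(v)$, and the fact that in the triangle-free case this recovers the Erd\H{o}s--Faudree--Pach--Spencer bound exactly, which is reassuring.
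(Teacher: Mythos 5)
Your proof is correct and follows essentially the same approach as the paper: both pick a uniformly random vertex $v$ and use the cut $(N(v), V \setminus N(v))$, bounding expectations via $\sum_u d(u)^2$ and Cauchy--Schwarz. The only cosmetic difference is that you lower-bound the expected cut size directly, whereas the paper upper-bounds the complementary quantity (expected edges inside $N(x)$ plus inside $\overline{N(x)}$); these are arithmetically equivalent.
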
 

\begin{proof}
We will show that there is a vertex $x$ for which $N(x)$ and $\overline{N(x)}$ form the desired bipartition of the vertex set by picking $x$ uniformly at random. The expected number of edges in the neighborhood of $x$ is $3t/n$. The expected number of edges in $\overline{N(x)}$ is
\begin{align*}
\frac{1}{n}\sum_{x \in V(G)} e(\overline{N(x)}) & = \frac{1}{n}\sum_{(a,b) \in E(G)} \left(n-d(a)-d(b)+d(a,b)\right)\\ 
& = m+\frac{3t}{n}-\frac{1}{n}\sum_{a \in V(G)} d(a)^2 \leq m+\frac{3t}{n}-\frac{4m^2}{n^2},
\end{align*} 
where the first equality follows by double counting the number of triples $(x,a,b)$ of vertices where $(a,b)$ is an edge but $(x,a)$ and $(x,b)$ are not edges and the last inequality is by Cauchy--Schwarz. Thus, the expected number of edges in $N(x)$ and $\overline{N(x)}$ is at most $m+\frac{6t}{n}-\frac{4m^2}{n^2}$. Hence, there exists a choice of $x$ for which this random variable is at most the expected value.
\end{proof}

We use Lemma~\ref{firstlemma} to prove the following result, which gives conditions under which a graph contains a large induced bipartite subgraph. 

\begin{lem}\label{secondlemma}
Let $G$ be a graph with $n$ vertices, $m \geq n^2/4$ edges, $t \leq c^2n^3/24$ triangles and book number $b \leq \left(\frac{1}{2}-c\right)n$. Then $G$ contains an induced bipartite subgraph that contains all but at most $48t/cn^2$ vertices. 
\end{lem}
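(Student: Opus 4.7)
The plan is to combine Lemma~\ref{firstlemma} with a threshold-based vertex deletion argument. First, I would apply Lemma~\ref{firstlemma} to $G$: because $m \geq n^2/4$, the quantity $m - 4m^2/n^2$ is non-positive, so the lemma produces a bipartition $(A,B)$ of $V(G)$ whose number of \emph{bad} edges (edges inside $A$ or inside $B$) is at most $6t/n$. I would then replace $(A,B)$ by a max-cut bipartition, which only decreases the bad-edge count and additionally guarantees that every vertex $v$ satisfies $b(v) \leq d(v)/2$, where $b(v)$ denotes the number of neighbors of $v$ on its own side of the bipartition.

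Next I would define $S = \{v \in V : b(v) \geq cn/4\}$. Since $\sum_v b(v) = 2\cdot(\text{bad edges}) \leq 12t/n$, the averaging bound $|S| \cdot (cn/4) \leq \sum_v b(v)$ gives $|S| \leq 48t/(cn^2)$, matching the conclusion of the lemma. What remains is to verify that $G[V\setminus S]$ is bipartite with the inherited bipartition $(A\setminus S,\, B\setminus S)$; equivalently, that every bad edge of $G$ has an endpoint in $S$.

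To prove this, I would argue by contradiction: suppose $uv$ is a bad edge with $u,v \in A\setminus S$ (the case $u,v \in B\setminus S$ is symmetric), so $b(u), b(v) < cn/4$. Applying the book bound on $uv$ yields $|N(u)\cap N(v)| \leq b \leq (1/2-c)n$, and inclusion-exclusion applied within $B$ gives $d_B(u)+d_B(v) - |B| \leq |N(u)\cap N(v)\cap B| \leq (1/2-c)n$. On the other hand, the near-tightness of the cut---since $e(A,B) = m - e(A) - e(B) \geq n^2/4 - 6t/n$ is within $O(c^2 n^2)$ of $|A||B| \leq n^2/4$---forces all but a negligible number of vertices in $A$ to have $d_B(v)$ very close to $|B|$, and in particular to have $d_B(u) + d_B(v)$ substantially exceeding $|B| + (1/2-c)n$. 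This yields the desired contradiction, unless $u$ or $v$ is one of the few atypical low-degree vertices, which can be preemptively added to $S$ at a cost well within the slack in the bound $48t/(cn^2)$.

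The main obstacle is executing this contradiction cleanly: the averaging bound gives $|S| \leq 48t/(cn^2)$ with some slack (by a factor of roughly two) over the argument that $\sum b(v) \leq 12 t/n$, so there is room to enlarge $S$ slightly to absorb the atypical low-degree vertices; the delicate point is verifying that the number of such vertices is indeed controlled by the edge deficit $|A||B| - e(A,B)$, which in turn is controlled by the combination $m \geq n^2/4$ and the Lemma~\ref{firstlemma} bound. I expect the contradiction to follow from balancing these quantities against the max-cut property $d_B(v) \geq b(v)$.
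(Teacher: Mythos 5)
Your overall skeleton (apply Lemma~\ref{firstlemma} to get a near-bipartition, then prune a small set of vertices whose removal restores bipartiteness) matches the paper, but there is a genuine gap in the pruning step. You define $S$ by the \emph{internal} degree condition $b(v) \geq cn/4$, and the averaging bound $\sum_v b(v) \leq 12t/n$ gives $|S| \leq 48t/(cn^2)$ with no room to spare; this already exhausts the entire budget. But the contradiction you then want to run on a bad edge $uv \subseteq A\setminus S$ requires lower bounds on the \emph{cross} degrees $d_B(u), d_B(v)$, and membership in $V\setminus S$ gives you no such bound: a vertex can have $b(v)=0$ (so $v\notin S$) and yet have small $d_B(v)$ simply because its overall degree is small (the hypotheses give $m \geq n^2/4$ but no minimum-degree condition, and max-cut only yields $d_B(v) \geq b(v)$, which is vacuous when $b(v)$ is tiny). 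You notice this yourself and propose to ``preemptively add'' the low-cross-degree vertices to $S$, citing a factor-of-two slack in the bound; that slack does not exist, so the enlarged set can overshoot $48t/(cn^2)$.

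The fix is to abandon $S$ entirely and define the removed set by the cross-degree condition, which is what the paper does. On the $A$-side, keep only those $v$ with $d_B(v) > (b+|B|)/2$. Any two such vertices $u,v$ satisfy $|N(u)\cap N(v)\cap B| \geq d_B(u)+d_B(v)-|B| > b$, so the retained set is automatically independent (no separate contradiction argument is needed, and the threshold $(b+|B|)/2$ is exactly tuned to the book-number hypothesis, which your looser $|B|-cn/4$ threshold only matches with a non-strict inequality). A discarded vertex has at least $|B| - (b+|B|)/2 = (|B|-b)/2 \geq cn/4$ non-neighbors in $B$ (using $|B| \geq (1-c)n/2$, which follows from $m\geq n^2/4$ and the bad-edge bound exactly as in your proposal), and since the total number of missing cross edges is at most $6t/n$, at most $24t/(cn^2)$ vertices are discarded from $A$. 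The symmetric bound on the $B$-side gives $48t/(cn^2)$ total. Note also that passing to a max-cut is unnecessary: the neighborhood cut from Lemma~\ref{firstlemma} suffices, and the max-cut property $b(v) \leq d(v)/2$ is never used in the correct argument.
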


\begin{proof}
By Lemma \ref{firstlemma} and $m \geq n^2/4$, $G$ has a vertex partition $V(G)=A_0 \cup B_0$ such that all but at most $6t/n \leq c^2n^2/4$  edges are in $A_0 \times B_0$. We have $|A_0|,|B_0| \geq \left(1-c\right)n/2$, as otherwise the number of edges in $G$ is at most $|A_0||B_0|+c^2n^2/4 < n^2/4$, a contradiction. 

Let $A$ consist of all vertices $a \in A_0$ with more than $(b+|B_0|)/2$ neighbors in $B_0$. The set $A$ is independent, as otherwise we would have an edge in more than $b$ triangles. From each vertex $a \in A_0 \setminus A$, the number of missing edges to $B_0$ is at least $(|B_0|-b)/2 \geq cn/4$. Thus, we get at least $|A_0 \setminus A| \cdot cn/4$ missing edges from $A_0 \setminus A$ to $B_0$. We also have that the number of missing edges across $A_0 \times B_0$ is at most $|A_0||B_0|-(m-6t/n) \leq 6t/n$, so it follows that $|A_0 \setminus A| \leq (6t/n)/(cn/4) =24t/cn^2$. Similarly, letting $B$ consist of all vertices $b \in B_0$ with more than $(b+|A_0|)/2$ neighbors in $B_0$, we have that $B$ is independent and $|B_0 \setminus B| \leq 24t/cn^2$. Thus, $A \cup B$ induces a bipartite subgraph that contains all but at most $48t/cn^2$ vertices. 
\end{proof}

Our goal in this section is to prove Theorem \ref{thmbooknearquarter}. Since the proof is somewhat long, we first give an outline. Let $G$ be a graph on $n$ vertices with at least $n^2/4$ edges and book number at most $b$ (where $b<n/4$, but is not much smaller) which is not the balanced complete bipartite graph, but contains as few triangles as possible. We let $H$ be an induced bipartite subgraph of $G$ with the maximum number of vertices. Let $A$ and $B$ be the parts of $H$ and let $C$ be the remaining vertices, so that $A$, $B$ and $C$ form a vertex partition of $G$. We begin the proof by deriving some simple properties of the graph $G$. For instance, as there are not many triangles in $G$, we can use Lemma \ref{secondlemma} to deduce that $|C|$ is small. We can also conclude that $C$ is nonempty since $G$ has at least $n^2/4$ edges but is not complete bipartite. Moreover, by the choice of $H$, every vertex in $C$ has a neighbor in both $A$ and $B$. With a little more effort, we can even show that the degree of each vertex in $C$ is at least the maximum of $A$ and $B$. 

From this point on, we do not need to use the fact that each edge of $G$ is in at most $b$ triangles, just that a random edge from $E(A \cup B,C)$ is in expectation in at most $b$ triangles. We form a new graph $G_1$ on the same vertex set as $G$ by adding edges to $A \times B$ to make $A$ complete to $B$ and deleting the same number of edges from $E(A \cup B,C)$. We can do this so that in $G_1$ each vertex in $C$ has degree at most $b$ to $A$ and degree at most $b$ to $B$, the total number of triangles does not increase and a random edge in $(A \cup B) \times C$ is in expectation in at most $b$ triangles. We are not able to guarantee that $G_1$ has book number at most $b$, but tracking this related expectation is sufficient for our purposes. 

%There are three types of triangle in $G_1$, those with exactly $i$ vertices in $C$ for $i=1,2,3$. It is easy to count the number of type $1$ triangles: it is the sum over all vertices in $C$ of the product of its degree to $A$ and its degree to $B$. The number of type $2$ triangles containing an edge $(u,v) \in E(A \cup B,C)$ is the codegree of $u$ and $v$ in $C$. There is a simple lower bound for this number, simply adding the degrees of $u$ and $v$ to $C$ and subtracting $|C|$. Summing over all edges in $E(A \cup B,C)$ and observing that each type $2$ triangle contains exactly two such summed edges, we get a lower bound on the number of type $2$ triangles. 

We now form another graph $G_2$ from $G_1$ by deleting edges in $E(C)$ and adding an equal number of edges to $(A \cup B) \times C$ so that each vertex in $C$ has degree $b$ to $A$ and degree $b$ to $B$. There are three types of triangle in $G_2$, those with exactly $i$ vertices in $C$ for $i=1,2,3$. It is easy to compute a lower bound on the number of type $1$ or $2$ triangles in $G_2$ and we show that this also gives a lower bound in $G_1$. Furthermore, the expected number of triangles containing a random edge of $E(G_2) \cap (A \cup B) \times C$ is at most the expected number of triangles containing a random edge of $E(G_1) \cap (A \cup B) \times C$. If $|C|<n-4b$, this expected number is larger than $b$, contradicting the fact that the corresponding expected number in $G$ is at most $b$. If $|C| \geq n-4b$, we find that the number of type $1$ or $2$ triangles in $G_2$ (and, hence, in $G$) is at least $b^2(n-4b)$, with equality only if $|C|=n-4b$. Furthermore, equality can occur only if $G = G_2$ and all triangles are of type $1$, so no edge in $C$ is in a triangle. But equality also implies that $|E(C)| \geq |C|^2/4$, so Mantel's theorem forces $C$ to induce a balanced complete bipartite graph. The parts of this partition determine two parts of the graph $S_{b,n}$, while the set of neighbors and nonneighbors of any vertex in $C$ partition each of $A$ and $B$ into two pieces, determining the remaining parts.

\vspace{3mm}

{\it Proof of Theorem \ref{thmbooknearquarter}.} Let $G$ be a graph on $n$ vertices with $m \geq n^2/4$ edges and book number at most $b=(1-\epsilon)n/4$, where $\epsilon \leq 1/500$, which is not the balanced complete bipartite graph, but for which the number $t$ of triangles is as small as possible. As the graph $S_{b,n}$ satisfies all of these conditions except possibly the last and has $b^2(n-4b)$ triangles, we may assume that $t \leq b^2(n-4b) \leq \epsilon n^3/16$. 

Let $H$ be the largest induced bipartite subgraph of $G$ and let $A$ and $B$ denote the parts of $H$ with $|A| \geq |B|$. Let $C=V(G)\setminus V(H)$. If a vertex in $C$ is not adjacent to some vertex in $A$, then we can add it to $A$ and get a larger induced bipartite subgraph of $G$, a contradiction. Since similar reasoning holds with $B$ in place of $A$, we have the following claim. 

{\bf Claim 1:} Every vertex in $C$ has a neighbor in both $A$ and $B$.  

By Lemma \ref{secondlemma} with $c=1/4$, we have the next claim. 

{\bf Claim 2:} $|C| \leq 192t/n^2 \leq 12\epsilon n$. 

If $|C|=0$, then $G$ is bipartite and, as the number of edges is at least $n^2/4$, $G$ has to be the balanced complete bipartite graph, a contradiction which yields the following claim. 

{\bf Claim 3:} The set $C$ is nonempty. 

We next observe that $G$ must have large minimum degree. 

{\bf Claim 4:} Every vertex in $B \cup C$ has degree at least $|A|$ and every vertex in $A$ has degree at least $|B|$. 

{\bf Proof:} Suppose that $v \in B \cup C$. If $d(v)<|A|$, we can delete all edges containing $v$ and then make $v$ complete to $A$. This operation increases the number of edges of $G$ and, as $v$ is not in any triangle in the new graph, does not increase $b(G)$ or $t(G)$. We can then delete an edge of the resulting graph which is in a triangle, obtaining a new graph $G'$ with at least $n^2/4$ edges which still has $b(G') \leq b$ but has fewer triangles than $G$. If $G'$ has zero triangles, then it is the complete balanced bipartite graph on an even number of vertices and the deleted edge would be in $n/2$ triangles, contradicting that the book number is at most $n/4$. Otherwise, $G'$ contradicts the choice of $G$ and the claim follows. The case where $v \in A$ follows similarly. \qed

As $A$ is an independent set with minimum degree at least $|B|$, each vertex $u \in A$ is adjacent to all but at most $|C| $ vertices in $B$. Similarly, every vertex in $B$ is adjacent to all but at most $|C|$ vertices in $A$.  We thus have the following claim. 

{\bf Claim 5:} Every vertex in $A$ (respectively, $B$) is adjacent to all but at most $|C|$ vertices in $B$ (respectively, $A$).

From Claims 1 and 5, we have the following claim, as otherwise $v$ is in an edge in more than $b$ triangles. 

{\bf Claim 6:} For every vertex $v \in C$, $d_A(v),d_B(v) \leq b+|C|$.

From the previous claim, for each vertex $v \in C$, we have 
$$d_A(v) = d(v)-d_C(v)-d_B(v) \geq |A|-|C|-(b+|C|) \geq \frac{n-|C|}{2}-|C|-(b+|C|)=\frac{n}{2}-b-\frac{5}{2}|C|.$$ 
Since the same bound clearly holds for $d_B(v)$, we have the following claim.

{\bf Claim 7:} For every vertex $v \in C$, $d_A(v),d_B(v) \geq \frac{n}{2}-b-\frac{5}{2}|C|$.

Let $D=D(G)=\max_{v \in C}(d_A(v),d_B(v))$ and $d = d(G)=\min_{v \in C}(d_A(v),d_B(v))$ so that $d \leq d_A(v),d_B(v) \leq D$ for all vertices $v \in C$. In general, for a graph parameter, we will usually not specify the graph if it is $G$, but we will if it is another graph, as we did in the proof of Claim 4. 

{\bf Claim 8:} $|C| > \frac{2}{3}\left(n-4b\right)$. 

{\bf Proof:} Suppose otherwise, that $|C| \leq \frac{2}{3}\left(n-4b\right)$.  If $D \leq b$, then the total number of edges of $G$ is at most 
\begin{eqnarray*} |A||B|+\sum_{v \in C} (d_A(v)+d_B(v))+{|C| \choose 2} & < & \left(\frac{n-|C|}{2}\right)^2+2b|C|+\frac{|C|^2}{2}
\\ & = & \frac{n^2}{4}+\frac{|C|}{2}\left(\frac{3}{2}|C|-(n-4b)\right) \\ & \leq & \frac{n^2}{4},\end{eqnarray*}
a contradiction. Thus, we must have $D > b$. Suppose $D=d_A(v)$ with $v \in C$ (the case $D=d_B(v)$ is handled in the same way). 
For each $u \in N_B(v)$, as the edge $(u,v)$ is in at most $b$ triangles, there must be at least $D-b$ missing edges from $u$ to $N_A(v)$. Hence, there are at least $(D-b)d_B(v)$ missing edges between $A$ and $B$. Then the number of edges in $G$ is at most 
\begin{eqnarray*} |A||B|-(D-b)d_B(v)+2D|C|+{|C| \choose 2} \! \!& < & \!\! \left(\frac{n-|C|}{2}\right)^2-(D-b)\left(\frac{n}{2}-b-\frac{5}{2}|C|\right)+2D|C|+\frac{|C|^2}{2} \\ \! \! & = &\!\! \frac{n^2}{4}+\frac{|C|}{2}\left(\frac{3}{2}|C|-(n-4b)\right)-(D-b)\left(\frac{n}{2}-b-\frac{9}{2}|C|\right)
\\\! \! & < &\! \! \frac{n^2}{4}+\frac{|C|}{2}\left(\frac{3}{2}|C|-(n-4b)\right) 
\\\!\!  & \leq & \! \!\frac{n^2}{4},
\end{eqnarray*} 
a contradiction.  The first inequality above uses Claim 7, while the second inequality uses $D-b>0$ and $\frac{n}{2}-b-\frac{9}{2}|C|>0$, which follows from $b \leq \frac{n}{4}$, Claim 2 and $\epsilon < 1/216$. 
\qed 

For $i \in \{0,1,2,3\}$, we say that a triangle in $G$ is of type $i$ if it contains exactly $i$ vertices from $C$. We let $t_i$ denote the number of triangles of type $i$. As there are no triangles in $H=G[A \cup B]$, we have $t_0=0$. Let $t'=t_1+t_2$ be the number of triangles of type 1 or 2.

Let $\bar b(G)$ denote the expected number of triangles containing a random edge in $E(A \cup B,C)$. That is, $\bar b(G)=2t'(G)/e_G(A \cup B,C)$. 

{\bf Claim 9:} There is a graph $G_1$ with $V(G_1)=V(G)$ and $e(G_1)=e(G)$ such that $G_1$ induces a complete bipartite graph on $A \cup B$ with parts $A$ and $B$, $D(G_1) \leq b$, $d(G_1) \geq \frac{n}{4}-\frac{5}{2}|C|$, $t(G_1) \leq t(G)$, $t'(G_1) \leq t'(G)$ and $\bar b(G_1) \leq \bar b(G)$. Moreover, if $G_1 \not =G$, then $t(G_1)<t(G)$.

{\bf Proof:} 
Suppose there are $s$ missing edges between $A$ and $B$ in $G$. Consider adding all $s$ missing edges between $A$ and $B$ (so $A$ is now complete to $B$) and then deleting $s$ edges between $C$ and $A \cup B$, deleting them one at a time from a vertex in $C$ of largest degree to $A$ or $B$ to obtain a new graph $G_1$. To see that this process is possible, note that each vertex $v \in B$ has degree at least $|A|$ by Claim 4 and so has at least as many neighbors in $C$ as it has nonneighbors in $A$. Note, by construction, that $V(G_1) = V(G)$ and $e(G_1)=e(G)$.

If $D(G)>b$ and $v$ is a vertex with $d_A(v)=D(G)$ (the case $d_B(v)=D(G)$ is handled in the same way), then, in the graph $G$, for each $u \in N_B(v)$, the edge $(u,v)$ is in at most $b$ triangles, so $u$ has at least $D(G)-b$ missing edges to $A$. Thus, by Claim 7, 
$$s \geq d_B(v)(D(G)-b) \geq \left(\frac{n}{2}-b-\frac{5}{2}|C|\right)(D(G)-b) \geq \left(\frac{n}{4}-\frac{5}{2}|C|\right)(D(G)-b)  \geq 2|C|(D(G)-b),$$
where the last inequality follows from Claim 2 and $\epsilon < 1/216$. The final expression is an upper bound on the number of edges that must be deleted between $C$ and $A \cup B$ to guarantee $\max_{v \in C}(d_A(v),d_B(v)) \leq b$. We thus have $D(G_1) \leq b$ in this case. If $D(G) \leq b$, then, since we only deleted edges between $C$ and $A \cup B$ to make $G_1$, $D(G_1) \leq D(G) \leq b$. Hence, in either case, we have $D(G_1) \leq b$. 

Observe that if $D(G_1)>d(G_1)+1$, then we must have $d(G_1)=d(G)$ as if, say, $d_A(v)=d(G)$, we would never delete an edge from $v$ to $A$ in the process of obtaining $G_1$. In this case, we have, by Claim 7, that $d(G_1)=d(G) \geq \frac{n}{2}-b-\frac{5}{2}|C| \geq \frac{n}{4}-\frac{5}{2}|C|$. Otherwise, we have that the degrees $d_A(v)$, $d_B(v)$ in $G_1$ are all simply the average degree rounded up or down. The number of edges of $G_1$ between $C$ and $A \cup B$ satisfies
$$e_{G_1}(A \cup B,C) \geq \frac{n^2}{4}-|A||B|-{|C| \choose 2} \geq \frac{n^2}{4}-\left(\frac{n-|C|}{2}\right)^2-\frac{|C|^2}{2}=\frac{|C|n}{2}-\frac{3}{4}|C|^2.$$
So the average value of $d_X(v)$ over all $2|C|$ choices of $v \in C$ and $X \in \{A,B\}$ is at least $\frac{n}{4}-\frac{3}{8}|C|$. 
Hence, 
\begin{equation}\label{dG1} 
d(G_1) \geq \min\left(\frac{n}{4}-\frac{5}{2}|C|,\frac{n}{4}-\frac{3}{8}|C|-1\right) \geq \frac{n}{4}-\frac{5}{2}|C|.
\end{equation}

Since each of the $s$ edges added between $A$ and $B$ is in at most $|C|$ triangles, in total this process added at most $s|C|$ triangles. Once these edges have been added and the graph between $A$ and $B$ is complete bipartite, we remove the $s$ edges from between $C$ and $A \cup B$. Since each such edge is contained in at least $d(G_1)$ triangles, we remove at least $s d(G_1)$ triangles in total. Hence, 
$$t'(G_1)-t'(G) \leq s(|C|-d(G_1)).$$ 
As $n \geq 14 \cdot 12 \epsilon n > 14|C|$ for $\epsilon < 1/168$, it follows from (\ref{dG1}) that $t'(G_1) \leq t'(G)$. As no edges in $C$ are added or deleted in obtaining $G_1$ from $G$, we have $t_3(G_1)=t_3(G)$ and, hence, $t(G_1) \leq t(G)$. Moreover, if $s \not =0$, then $t'(G_1)<t'(G)$ and, hence, $t(G_1)<t(G)$.

Finally, we check that $\bar b(G_1) \leq \bar b(G)$. This is equivalent to showing that
$$\frac{2t'(G_1)}{e_{G_1}(A \cup B,C)} \leq \frac{2t'(G)}{e(A \cup B,C)}$$
and, as $e_{G_1}(A \cup B,C)=e(A \cup B,C)-s$, this is equivalent to showing that
$$\left(t'(G)-t'(G_1)\right)e(A \cup B,C) \geq st'(G).$$
From the bound $t'(G)-t'(G_1) \geq s(d(G_1)-|C|)$, this would follow if we could show that
$$(d(G_1)-|C|)e(A \cup B,C) \geq t'(G).$$
Each edge in $E(A \cup B,C)$ is in at most $b$ triangles in $G$ and each type $1$ or $2$ triangle has exactly two such edges, so $t'(G) \leq e(A \cup B,C)b/2$. Hence, it suffices to show that $d(G_1)-|C|\geq b/2$, which follows from (\ref{dG1}), $|C| \leq 12\epsilon n$, $b \leq n/4$ and $\epsilon \leq 1/336$.  This completes the proof of Claim 9. \qed

{\bf Claim 10:} $|C| \leq \frac{1}{1-240\epsilon}(n-4b) \leq 2(n-4b)=2\epsilon n$. 
 
{\bf Proof:} Suppose, for the sake of contradiction, that $|C|>\frac{1}{1-240\epsilon}(n-4b)$. Each vertex $v \in C$ is in $d_A(v,G_1)d_B(v,G_1)$ type $1$ triangles in $G_1$. As $d_A(v,G_1)d_B(v,G_1)\geq d(G_1)^2 \geq \left(\frac{n}{4}-\frac{5}{2}|C|\right)^2$ by Claim 9, the number of triangles in $G_1$ is at least $$|C|\left(\frac{n}{4}-\frac{5}{2}|C|\right)^2 \geq \left(\frac{n}{4}\right)^2|C|\left(1-\frac{20|C|}{n}\right).$$
This last expression is an increasing function of $|C|$ for $|C|\leq \frac{n}{40}$ (which holds since $|C| \leq 12\epsilon n$ and $\epsilon \leq 1/480$). Hence, as $b \leq n/4$ and $\frac{1}{1-240\epsilon}(n-4b) < |C| \leq 12\epsilon n$, we have that the number of triangles in $G_1$ (and, hence, $G$) is larger than $b^2(n-4b)$, a contradiction. \qed

For any graph $G'$ on $V(G)$ for which $A \cup B$ induces a complete bipartite graph, the number of triangles containing an edge $(u,v) \in E_{G'}(A,C)$ is 
$$d_B(u,v,G')+d_{C}(u,v,G')=d_B(v,G')+d_C(u,v,G') \geq d_B(v,G')+d_C(u,G')+d_C(v,G')-|C|.$$
Similarly, if $(u,v) \in E_{G'}(B,C)$, then the number of triangles in $G'$ containing the edge $(u,v)$ is at least $d_A(v,G')+d_C(u,G')+d_C(v,G')-|C|$. Summing over all edges in $E_{G'}(A \cup B,C)$ and using the fact that each type 1 or 2 triangle contains exactly two such edges, the number of type 1 or 2 triangles in $G'$ is at least $\tilde{t}(G')$, defined by 
$$2\tilde{t}(G'):=-|C|e_{G'}(A \cup B,C)+\sum_{v\in C} \left(2d_A(v,G')d_B(v,G')+d_C(v,G')d_{A \cup B}(v,G')\right)+\sum_{u \in A \cup B}d_C(u,G')^2.$$
To see this, note, for example, that each term of the form $d_C(u, G')$ appears $d_C(u, G')$ times, once for each edge $(u, v) \in E_{G'}(A \cup B, C)$.

{\bf Claim 11:} There is a graph $G_2$ obtained from $G_1$ by deleting some edges with both vertices in $C$ and adding an equal number of edges to $(A \cup B) \times C$ such that $d_A(v,G_2)=d_B(v,G_2)=b$ for all $v \in C$, $\tilde{t}(G_2) \leq \tilde{t}(G_1)$ and $e_{G_2}(A \cup B,C) \geq e_{G_1}(A \cup B,C)$. Moreover, if 
$G_2 \not =G_1$, then $\tilde{t}(G_2)<\tilde{t}(G_1)$.

{\bf Proof:} As $d_A(v,G_1),d_B(v,G_1) \leq b$, we can arbitrarily delete edges from $C$ (as long as there are edges) and add an equal number of edges to $(A \cup B) \times C$  to obtain the graph $G_2$ with $d_A(v,G_2)=d_B(v,G_2)=b$. This is possible because, by Claim 10 and $b = (1 - \epsilon)n/4$, the number of edges we would get, not including those in $C$, is 
$$|A||B|+|C|2b \leq \left(\frac{n-|C|}{2}\right)^2+|C|2b = \frac{n^2}{4}+\frac{|C|^2}{4}-\frac{\epsilon}{2}|C|n \leq \frac{n^2}{4},$$
leaving enough room for a nonnegative number of edges in $C$. Note that, by construction, $G_2$ has at least as many edges across $(A \cup B) \times C$ as $G_1$. 

Let $G'$ be a graph obtained at some stage of the process of transforming $G_1$ into $G_2$. If we delete an edge $(v,v')$ from $G'$ with $v,v'\in C$ to obtain $G''$, then it decreases the value of $2\tilde{t}(G')$ by $d_{A \cup B}(v,G')+d_{A \cup B}(v',G') \geq 4\left( \frac{n}{4}-\frac{5}{2}|C|\right)=n-10|C|$, where the inequality is by the lower bound on $d(G_1)$ from Claim 9. If we add an edge $(u,v) \in (A \cup B) \times C$ to this graph (with, say, $u \in A$), it increases the value of $2\tilde{t}(G'')$ by $$-|C|+ 2 d_B(v,G'')+d_C(v,G'')+2d_C(u,G'')+1 \leq 2|C|+1+ 2b,$$ where the last inequality uses $d_C(v,G''),d_C(u,G'') \leq |C|$ and $d_B(v,G'') \leq b$. Hence, in deleting an edge with both vertices in $C$ and adding an edge in $(A \cup B) \times C$, we decreased the value of $\tilde{t}(G')$ by at least $n-10|C|-(2|C|+1+2b) \geq \frac{n}{2}-13|C| \geq \frac{n}{2}-156\epsilon n > 0$, where we used Claim 2. Thus, in the process of going from $G_1$ to $G_2$, $\tilde{t}$ decreases at each step, so 
 $\tilde{t}(G_2) \leq \tilde{t}(G_1)$, with equality only if $G_2 = G_1$. \qed 

We have 
\begin{eqnarray*} 2\tilde{t}(G_2) & = & -2|C|^2b+2|C|b^2+4be_{G_2}(C)+\sum_{u \in A \cup B} d_C(u,G_2)^2 \\ & \geq & -2|C|^2b+2|C|b^2+4b\left(n^2/4-|A||B|-2b|C|\right)+4|C|^2b^2/|A \cup B| \\ & \geq & 
-2|C|^2b+2|C|b^2+4b\left(n^2/4-\left((n-|C|)/2\right)^2 -2b|C|\right)+4|C|^2b^2/|A \cup B|
\\ & = & -3|C|^2b-6|C|b^2+2|C|bn+4|C|^2b^2/(n-|C|),\end{eqnarray*} 
where, in the first inequality, we used the Cauchy--Schwarz inequality and $\sum_{u \in A \cup B} d_C(u,G_2) = 2 |C| b$. The last expression, as a function of $|C|$, is increasing for $b$ in the range of interest and $|C|$ in the range determined by Claims 8 and 10, which can be seen by taking the derivative with respect to $|C|$. Using this fact, we may evaluate this expression at $|C|=n-4b$ to conclude that 
$$b^2(n-4b) \leq \tilde{t}(G_2) \leq \tilde{t}(G_1) \leq t(G_1) \leq t(G)$$ 
for $|C| \geq n-4b$. Furthermore, the only way we could get equality in the above bound is if $|C|=n-4b$, $|A|=|B|=2b$ and if we moved no edges in getting $G_1$ from $G$ and $G_2$ from $G_1$, so that $G_2$ and $G$ are the same. Therefore, in $G$, $A$ is complete to $B$ and $d_A(v)=d_B(v)=b$ for each vertex $v \in C$. Hence, as each vertex in $C$ is in $b^2$ type $1$ triangles, the number of triangles of type $1$ in $G$ is $b^2(n-4b)$  so there are no type $2$ or $3$ triangles in $G$. In particular, no edge in $C$ belongs to a triangle. On the other hand,
$$e(C) \geq \frac{n^2}{4} - |A||B| - 2b|C| \geq \frac{n^2}{4} - \left(\frac{n - |C|}{2}\right)^2 - 2b|C| = -\frac{|C|^2}{4} + \frac{\epsilon}{2} |C| n = \frac{|C|^2}{4},$$
where, in the last inequality, we used that $|C| = n - 4 b = \epsilon n$. As $C$ has at least $|C|^2/4$ edges but induces a triangle-free graph, Mantel's theorem implies that $|C|$ is even (which is equivalent to $n$ being even) and $C$ induces a balanced complete bipartite graph with parts $C_1$, $C_2$ of equal size. As no edge in $C$ is in a triangle with a vertex in $A$ or $B$ and yet $d_A(v)=b=|A|/2$ and $d_B(v)=b=|B|/2$ for each $v \in C$, we have equitable partitions $A=A_1 \cup A_2$ and $B=B_1 \cup B_2$ such that $C_1$ is complete to $A_1 \cup B_1$, $C_2$ is complete to $A_2 \cup B_2$ and there are no other edges between $A \cup B$ and $C$. It is now easy to check that $G$ is the graph  $S_{n,b}$ with parts $A_1,B_1,C_1,B_2,A_2,C_2$.

It remains to check the case $|C|<n-4b$. We will show that there is an edge in more than $b$ triangles, a contradiction. Indeed, 
\begin{eqnarray*} b(G) & \geq & \bar b(G) \geq \bar b(G_1) = 2 t'(G_1)/e_{G_1}(A \cup B,C) \geq 2 \tilde{t}(G_1)/e_{G_1}(A \cup B,C)\\ & \geq & 2 \tilde{t}(G_2)/e_{G_1}(A \cup B,C) \geq 2 \tilde{t}(G_2)/e_{G_2}(A \cup B,C).\end{eqnarray*}
This last expression is at least 
$$\frac{1}{2b|C|}\left( -3|C|^2b-6|C|b^2+2|C|bn+4|C|^2b^2/(n-|C|) \right) = -\frac{3}{2}|C|-3b+n+2|C|b/(n-|C|).$$
In the range of interest, this function is strictly decreasing in $|C|$. Given that we are assuming that $|C|<n-4b$, if we evaluate the above expression at $|C|=n-4b$, we get $b$ and, hence, $b(G)$ is greater than this value, a contradiction. This completes the proof of Theorem \ref{thmbooknearquarter}. \qed

\section{Proof of Theorem~\ref{thmbooknearsixth}} \label{sec:sixth}

The main result of this section, which easily implies Theorem~\ref{thmbooknearsixth}, is as follows.

\begin{thm}
If $\epsilon > 0$ is sufficiently small, then every graph on $n$ vertices with at least $n^2/4$ edges and book number at most $\left( \frac16 + \epsilon^3 \right) n$ which is not the balanced complete bipartite graph has at least $\left( \frac{1}{108} - O( \epsilon ) \right) n^3$ triangles.
\end{thm}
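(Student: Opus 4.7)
My plan is to establish the theorem by a stability argument, building on the results of Bollob\'as and Nikiforov alluded to in the introduction. Note first that in the extremal construction $S_{n/6,n}$ all six parts have size exactly $n/6$, and a direct case-check shows that every triangle has one vertex in each of $U_1, U_2, U_3$ or one vertex in each of $V_1, V_2, V_3$ (the edges $U_i U_j$, $V_i V_j$, and $U_i V_i$ admit no other common-neighbour configurations). This gives $2(n/6)^3 = n^3/108$ triangles, matching the target bound, so the theorem is in effect a stability statement about graphs whose book number is within $\epsilon^3 n$ of the Edwards / Khad\v ziivanov--Nikiforov bound.

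The first step is to invoke a quantitative stability version of the $b(G) \geq n/6$ theorem. Concretely, I expect an input of the following form: if $G$ is a graph on $n$ vertices with at least $n^2/4$ edges, book number at most $(1/6 + \delta) n$, and $G$ is not the balanced complete bipartite graph, then $G$ admits a partition $V(G) = U_1 \sqcup U_2 \sqcup U_3 \sqcup V_1 \sqcup V_2 \sqcup V_3$ with each part of size $(1 \pm O(\delta^{1/3})) n/6$ and with $|E(G) \mathbin{\triangle} E(S_{n/6,n})| = O(\delta^{1/3}) n^2$ under the natural identification. Specialising to $\delta = \epsilon^3$ yields edit distance $O(\epsilon) n^2$ and part-size slack $O(\epsilon) n$; the cube in the exponent of the hypothesis is precisely what is needed to absorb the losses from a smoothing-type stability argument of this kind. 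The Bollob\'as--Nikiforov paper, together with the underlying Edwards / Khad\v ziivanov--Nikiforov identity, should provide essentially all the inputs, perhaps with some additional bookkeeping.

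Given the partition, the triangle count is straightforward. The total number of triples $(u_1,u_2,u_3) \in U_1 \times U_2 \times U_3$ together with triples $(v_1,v_2,v_3) \in V_1 \times V_2 \times V_3$ is
\[
2 \cdot (1 - O(\epsilon))^3 (n/6)^3 = \bigl( 1/108 - O(\epsilon) \bigr) n^3.
\]
Each such triple either forms a triangle in $G$ or is missing at least one of its three constituent edges inside $\{U_1,U_2,U_3\}$ or inside $\{V_1,V_2,V_3\}$. There are at most $O(\epsilon) n^2$ such missing edges, and each lies in at most $(1 + O(\epsilon)) n/6$ of the above triples, so the triangle deficit is $O(\epsilon) n^3$. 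Subtracting gives at least $(1/108 - O(\epsilon)) n^3$ triangles in $G$, as required.

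The main obstacle is extracting the stability statement with the precise quantitative dependence required; an $\epsilon^3 n$ slack in the book number must translate into $O(\epsilon) n^2$ edit distance and $O(\epsilon) n$ part-size error. A cube-root loss of this form is standard for stability results derived by iterative local adjustment, but tracking the constants carefully inside the Bollob\'as--Nikiforov machinery is likely the most delicate part. A secondary concern is ruling out alternative near-extremal structures: a priori, there could be graphs with book number near $n/6$ that look nothing like $S_{n/6,n}$, but the combined hypotheses (at least $n^2/4$ edges, book number essentially tight, and not the balanced complete bipartite graph) should pin down the structure up to $o(n^2)$ perturbations. The ``not complete bipartite'' hypothesis serves precisely to exclude the degenerate case $|U_3| = |V_3| = 0$, in which $S_{n/6,n}$ collapses to $K_{n/2,n/2}$ and has no triangles at all.
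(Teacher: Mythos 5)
The main gap in your proposal is that the structural stability theorem you invoke is not proved, not cited from anywhere it exists, and is in fact the crux of the entire problem. You write that you ``expect an input of the following form'' and that Bollob\'as--Nikiforov ``should provide essentially all the inputs, perhaps with some additional bookkeeping,'' but the Bollob\'as--Nikiforov paper supplies only the inequality $(6b-n)t(G) \ge b\bigl(\sum_v d(v)^2 - nm\bigr)$, which yields approximate regularity of $G$ --- nothing close to a six-part edit-distance stability theorem with explicit $O(\delta^{1/3})$ dependence. Such a theorem would say that any graph with $\ge n^2/4$ edges, book number $\le (1/6+\delta)n$, and not equal to $K_{n/2,n/2}$ is globally close to the prism blow-up $S_{n/6,n}$; this is a strong structural classification and proving it would require essentially all the work that the actual theorem requires, plus more (the statement you need is strictly stronger than the triangle-count lower bound you are trying to establish). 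Asserting the cube-root loss ``is standard for stability results derived by iterative local adjustment'' does not substitute for carrying out the argument, and there is no iterative local adjustment in the cited sources to track. Once the stability statement is granted, your triangle count is routine and correct, but the burden of the proof is exactly the part you have deferred.

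The paper's proof does something genuinely different and considerably more economical: it never establishes a global six-part structure at all. From the Bollob\'as--Nikiforov inequality it deduces only approximate regularity (Claim A). It then works locally: fixing a vertex $v$, it shows every vertex of $N_v$ with a neighbour inside $N_v$ has about $n/6$ neighbours there (Claim B), so if $e(N_v)>0$ then $e(N_v)\gtrsim n^2/36$ and there are $\gtrsim n^3/216$ triangles with two vertices in $N_v$ (Claim C); a degree-sum argument gives $e(N_v)\approx e(M_v)$ (Claim D); and a counting argument shows some $w\in N_v$ has no neighbours in $N_v$ (Claim E), so $N_w\subset M_v$. Picking $v$ in some triangle and applying Claim C to both $N_v$ and $N_w$ produces two disjoint families of $\gtrsim n^3/216$ triangles, totalling $\gtrsim n^3/108$. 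The prism structure emerges only afterward, in the analysis of the equality case, as a \emph{consequence} of the bound rather than an input to it. Your plan inverts this logic, and without a proof of the stability input it does not constitute a proof.
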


\begin{proof}
Suppose that $G$ is a graph satisfying the assumptions of the theorem. As with Theorem~\ref{thmbooknearquarter}, we will prove the result through a sequence of claims.

{\bf Claim A:}
$G$ is approximately regular, in that $| \{ v : \left| d(v) - \frac{n}{2} \right| \ge \epsilon n \} | \le \epsilon n$.

{\bf Proof:}
Let $b = b(G)$, $t(G)$ be the number of triangles in $G$ and $m$ be the number of edges. We use the following inequality, proved by Bollob\'as and Nikiforov \cite{BN05} (see Equation (8)),
\[ ( 6 b - n ) t(G) \ge b \left( \sum_v d(v)^2 - nm \right). \]
Since  $\sum_v d(v)^2 = \sum_v \left( d(v) - \frac{n}{2} \right)^2 + 2mn - \frac{n^3}{4}$, we have
\[ ( 6 b - n ) t(G) \ge b \left( \sum_v \left( d(v) - \frac{n}{2} \right)^2 + nm - \frac{n^3}{4} \right) \geq b \sum_v \left( d(v) - \frac{n}{2} \right)^2. \]
As the right-hand side is non-negative and $t(G) > 0$ (since $G$ is not the balanced complete bipartite graph), it follows that $6 b - n \ge 0$.  Using the simple bound $t(G) \le \frac13 b m$, we find that 
$$6 b - n \ge \frac{3}{m} \sum_v \left( d(v) - \frac{n}{2} \right)^2 \ge \frac{3}{m} | \{ v : | d(v) - \tfrac{n}{2} | \ge \epsilon n \} | \epsilon^2 n^2.$$
Suppose now that $| \{ v : \left| d(v) - \frac{n}{2} \right| \ge \epsilon n \} | > \epsilon n$.  Substituting this in and using $m \leq n^2/2$ yields $6 b - n > \frac{3 \epsilon^3 n^3}{m} \ge 6 \epsilon^3 n$ and, hence, $b > \left( \frac16 + \epsilon^3 \right)n$, a contradiction.
\qed

Now remove any vertices of degree less than $\left( \frac12 - \epsilon \right)n$ from $G$.  By Claim A, this gives a new graph $G'$ on $n' \geq (1 - \epsilon)n$ vertices.
Since $G$ had at least $n^2/4$ edges and we removed at most $(n - n')(\frac{1}{2}-\epsilon) n$ edges, $G'$ also has at least
$(n')^2/4$ edges. The minimum degree of $G'$ is at least $(\frac{1}{2}-2\epsilon)n \geq (\frac{1}{2}-2\epsilon)n'$ and $b(G') \leq ( \frac16 + \epsilon^3) n \leq ( \frac16 + \frac{\epsilon}{5}) n'$. For simplicity, we shall again call this smaller graph $G$ and suppose that it has $n$ vertices. Furthermore, increasing $\epsilon$ by at most a factor $2$, we have that the minimum degree of $G$ is at least $(\frac{1}{2}-\epsilon)n$ and $b(G) \leq ( \frac16 + \frac{\epsilon}{10}) n$.  The additional error introduced by increasing  $\epsilon$ is easily covered by the $O(\epsilon)$ term in our bound on the number of triangles.

Given any vertex $v \in G$, we will use the shorthand $N_v$ for the neighbors of $v$ and $M_v$ for the set of nonneighbors (including $v$).  Note that we have $|N_v| \ge \left( \frac12 - \epsilon \right)n$ and $|M_v| \le \left( \frac12 + \epsilon \right) n$ for all $v$.  Clearly, for any $v \in G$ and $x \in N_v$, we have $d_{N_v}(x) \le b(G) \le \left( \frac16 + \frac{\epsilon}{10} \right)n$.

{\bf Claim B:}
Given $v \in G$ and $x \in N_v$, if $d_{N_v}(x) \ne 0$, then $\left( \frac16 - 4\epsilon \right)n \le d_{N_v}(x) \le \left( \frac16 + \frac{\epsilon}{10} \right) n$.

{\bf Proof:}
Let $y \in N_v$ be a neighbour of $x$.  Note that $x$ and $y$ both have degree at least $\left( \frac12 - \epsilon \right)n$.  Thus, the number of common neighbors in $M_v$ is at least 

\begin{eqnarray}
\label{claimB}
d_{M_v}(x,y) &\ge& d_{M_v}(x) + d_{M_v}(y) - |N_{M_v}(x) \cup N_{M_v}(y)| \ge d_{M_v}(x) + d_{M_v}(y) -|M_v|\nonumber\\ 
&\ge& \left( \frac12 - 3 \epsilon \right) n - d_{N_v}(x) - d_{N_v}(y),
\end{eqnarray}

where we used that $d_{M_v}(x) = d(x) - d_{N_v}(x) \geq \left(\frac{1}{2} - \epsilon\right) n - d_{N_v}(x)$ and $|M_v| \le \left( \frac12 + \epsilon \right) n$. Using the bounds $d_{M_v}(x,y) \le d(x,y) \le b(G) \le \left( \frac16 + \frac{\epsilon}{10} \right)n$ and $d_{N_v}(y) \le b(G) \le \left( \frac16 + \frac{\epsilon}{10} \right) n$, we deduce that $d_{N_v}(x) \ge \left( \frac16 - 3 \epsilon - \frac{\epsilon}{5} \right) n \ge \left( \frac16 - 4 \epsilon \right)n$.
\qed

This proof also shows that if $x$ and $y$ are neighbors in $N_v$, then they have at least $\left( \frac16 - 4 \epsilon \right)n$ common neighbors in $M_v$. Therefore, 
they can have at most $O( \epsilon n)$ common neighbors in $N_v$. Note also that we must have $d(v) = |N_v| \le \left( \frac12 + O( \epsilon ) \right) n$, since a smaller bound on the size of $|M_v|$ would force $d_{M_v}(x,y) > \left( \frac16 + \frac{\epsilon}{10} \right) n$.

{\bf Claim C:}
Suppose $e(N_v) > 0$.  Then $e(N_v) \ge \left( \frac{1}{36} - O( \epsilon ) \right)n^2$ and there are at least $\left( \frac{1}{216} - O( \epsilon ) \right)n^3$ triangles with two vertices in $N_v$ and one in $M_v$.

{\bf Proof:}
Suppose $x \sim y$ in $N_v$.  Then $d_{N_v}(x) > 0$, so $d_{N_v}(x) \ge \left( \frac16 - 4 \epsilon \right)n$ and similarly for $y$.  Moreover, since $x$ and $y$ have at most $O( \epsilon n)$ common neighbors in $N_v$, the neighbors of $x$ and the neighbors of $y$ in $N_v$ give at least $\left( \frac13 - O( \epsilon ) \right)n$ vertices of positive degree in $G[N_v]$, each of which has degree at least $\left( \frac16 - 4 \epsilon \right)n$.  Thus, $e(N_v) \ge \frac12 \left( \frac13 - O( \epsilon ) \right) \left( \frac16 - 4 \epsilon \right) n^2 = \left( \frac{1}{36} - O( \epsilon ) \right) n^2$.  Moreover, each of these edges must have at least $\left( \frac16 - O( \epsilon ) \right) n$ common neighbors in $M_v$, giving $\left( \frac{1}{36} - O( \epsilon ) \right) \left( \frac16 - O( \epsilon ) \right) n^3 = \left( \frac{1}{216} - O( \epsilon ) \right) n^3$ triangles with two vertices in $N_v$ and one in $M_v$.
\qed

{\bf Claim D:}
For every $v \in G$, $e(N_v) = e(M_v) + O( \epsilon n^2)$.

{\bf Proof:}
We have $\sum_{x \in N_v} d(x) = 2e(N_v) + e(N_v,M_v)$ and $\sum_{x \in M_v} d(x) = 2e(M_v) + e(N_v, M_v)$.  Since the graph is almost regular by Claim A and $|N_v|, |M_v| = \left( \frac12 + O( \epsilon ) \right)n$, it follows that the two sums are approximately equal, that is, $e(N_v) = e(M_v) + O( \epsilon  n^2)$.
\qed

{\bf Claim E:}
For every $v \in G$, there is some $w \in N_v$ with $d_{N_v}(w) = 0$.

{\bf Proof:}
Suppose on the contrary that $d_{N_v}(w) > 0$ for all $w \in N_v$.  Then, by $|N_v| \geq \left( \frac12 - \epsilon \right)n$ and Claim B,  
\[ e(N_v) \ge \frac12 \left( \frac12 - \epsilon \right) \left( \frac16 - 4 	\epsilon \right)n^2 = \left( \frac{1}{24} - O( \epsilon ) \right) n^2. \]
Each of these edges extends to at least $\left( \frac16 - O( \epsilon ) \right)n$ triangles with a vertex in $M_v$, giving at least $\left( \frac{1}{144} - O(\epsilon) \right)n^3$ such triangles. Moreover, by Claim D, we have $e(M_v) \ge \left( \frac{1}{24} - O( \epsilon ) \right) n^2$.  

Now consider any edge $x \sim y$ in $N_v$ and count the number of triangles containing $x$ or $y$ with two vertices in $M_v$.  By Claim B, we have $|N_{M_v}(x)| \leq \left(\frac{1}{3} + O(\epsilon)\right) n$. Together with $d_{M_v}(x, y) \ge \left( \frac16 - 4 \epsilon \right)n$, this implies that $| N_{M_v}(x) \setminus N_{M_v}(y) | \le \left(\frac16 + O(\epsilon) \right)n$ and similarly for $| N_{M_v}(y) \setminus N_{M_v}(x)|$.  
The proof of Claim B also implies that $|M_v \setminus (N_{M_v}(x) \cup N_{M_v}(y))|=O(\epsilon n)$, otherwise, from inequality (\ref{claimB}), we  have that $d_{M_v}(x,y)$ is too big, a contradiction.
Therefore, there are at most $\left( \frac{1}{36} + O(\epsilon) \right)n^2$ edges in $M_v$ that do not form a triangle with $x$ or $y$.  This leaves at least $\left( \frac{1}{72} - O(\epsilon) \right)n^2$ edges that do form a triangle with at least one of $x$ or $y$.  Summing these up for every edge in $N_v$ gives $\left( \frac{1}{1728} - O( \epsilon ) \right)n^4$. By Claim B, each vertex $x$ has degree at most $\left(\frac16 + \frac{\epsilon}{10} \right) n$ in $N_v$, so this gives an upper bound on the number of times any triangle can be counted.  Hence, the total number of such triangles (one vertex in $N_v$, two in $M_v$) is at least $\left( \frac{1}{288} - O(\epsilon) \right)n^3$.  This implies that the total number of triangles in $G$ is at least $\left( \frac{1}{96} - O( \epsilon ) \right) n^3$, which is too large.
\qed

We may now complete the proof. Since $e(G) \geq n^2/4$ and $G$ is not the balanced complete bipartite graph, $G$ must contain a triangle. Let $v$ be a vertex of this triangle, so that $e(N_v) > 0$. By Claim C, we have $e(N_v) \ge \left( \frac{1}{36} - O( \epsilon ) \right)n^2$ and at least $\left( \frac{1}{216} - O( \epsilon ) \right) n^3$ triangles with two vertices in $N_v$.  Moreover, by Claim E, there is some $w \in N_v$ with $d_{N_v}(w) = 0$, which implies that $N_w \subset M_v$.  Note now that $|M_v \setminus N_w | = O( \epsilon  n)$.  Since $e(M_v) = e(N_v) + O(\epsilon n^2)$ by Claim D, it follows that $e(N_w) \ge e(N_v) - O( \epsilon n^2) > 0$.  Hence, again by Claim C, there are at least $\left( \frac{1}{216} - O( \epsilon ) \right)n^3$ triangles with two vertices in $N_w$.  Since $N_w \cap N_v = \emptyset$, these triangles are distinct from those above, which gives $\left( \frac{1}{108} - O( \epsilon ) \right)n^3$ triangles in total.
\end{proof}

If $\epsilon = 0$ and equality holds throughout the argument above, consider a vertex $v$ which is contained in a triangle and a vertex $x \in N_v$ with $d_{N_v}(x) \neq 0$. 
Then, by Claim A, the graph is $n/2$-regular and, by Claim B, we have $d_{N_v}(x) = n/6$. Note, moreover, that $N_v$ is triangle-free by the comments after Claim B, which implies that $N(v, x)$ is an independent set. Similarly, for any $y \in N(v, x)$, $d_{N_v}(y) = n/6$ and $N(v, y)$ must be an independent set. We now split $N_v$ into three parts, each with $n/6$ vertices, namely, $N(v, x)$, $N(v, y)$ and the remainder, which we label $R_v$. 
By the proof of Claim C, we see that if $e(N_v)>n^2/36$, then there are more than $n^3/216$ triangles with two vertices in $N_v$ and one in $M_v$. Since, by Claim E, there is a vertex $w \in N_v$ with no neighbors in $N_v$, we have that
$N_w=M_v$ and, hence, there are at least $n^3/216$ triangles with two vertices in $M_v$ and one in $N_v$. So altogether there are more than $n^3/108$ triangles, a contradiction. This implies that $e(N_v)=n^2/36$ and, therefore, there are exactly $n/3$ vertices in $N_v$
with degree $n/6$ in $N_v$. Since the neighbors of $x$ and $y$ must all have positive degree in $N_v$ (which by the above discussion should be $n/6$), we conclude that the vertices in $R_v$ have no neighbors in $N_v$, while there must be a complete bipartite graph between $N(v,x)$ and $N(v,y)$. 

Picking now any vertex $u \in R_v$, we see that its neighborhood must be $M_v$, the complement of $N_v$. By the same argument as above, the induced graph on $M_v = N_u$ must consist of a balanced complete bipartite graph between two parts $N(u, x'), N(u, y')$, each with $n/6$ vertices, and a set $R_u$ of $n/6$ vertices with no neighbors in $M_v$, each of which must then be complete to $N_v$. Since there are $n^3/216$ triangles between $R_u, N(v, x)$ and $N(v, y)$ and a similar number between $R_v, N(u, x')$ and $N(u, y')$, we see that there are no more triangles, so any vertex in $N(v, x) \cup N(v, y)$ can only have neighbors in one of $N(u, x')$ or $N(u, y')$ and vice versa. Putting all this together, we see that equality holds only if the graph is the blow-up of a $3$-prism with $n/6$ vertices in each part, as claimed.

\vspace{-2mm}

\section{Concluding remarks}

The most obvious question that we have left open is Conjecture~\ref{mainconj}. Our results only establish this conjecture when $b = n/6$ or when $0.2495 n \leq b < n/4$, so much more remains to be done. In the first instance, it might be interesting to show that there is some $\epsilon > 0$ such that the conjecture holds for all $n/6 \leq b \leq \left(\frac{1}{6} + \epsilon\right) n$. 

There are of course many natural variants of Mubayi's question: how does the tradeoff between triangles and books change if we assume there are at least $\alpha n^2$ edges for some $1/4 < \alpha < 1/2$? what happens for larger cliques? what about hypergraphs? But the question also points to a more general metaquestion, of how  the local and global counts for substructures play off against one another. There are many contexts besides graphs in which such questions can be asked.

\vspace{3mm}
{\bf Acknowledgements.} We are grateful to Shagnik Das and Nina Kam\v cev for helpful conversations and are particularly indebted to Shagnik for writing up an early draft of Section~\ref{sec:sixth}. We would also like to thank the anonymous referees for their detailed and insightful reports.

\end{document}